\newtheorem{theorem}{Theorem}[section]
\newtheorem{proposition}[theorem]{Proposition}
\newtheorem{corollary}[theorem]{Corollary}
\theoremstyle{definition}
\newtheorem{definition}[theorem]{Definition}
\theoremstyle{remark}
\newtheorem{remark}[theorem]{Remark}
\newcommand{\R}{\mathbb{R}}
\newcommand{\Rn}{\mathbb{R}^n}
\newcommand{\Hh}{\mathcal{H}}
\newcommand{\diam}{\operatorname{diam}}
\newcommand{\dist}{\operatorname{dist}}
\title[A Mixed-Gauge Carath\'eodory Measure]{A Mixed-Gauge Carath\'eodory Measure Bridging Lebesgue Volume and Surface Content}
\author{Yash Thakur}
\address{Individual}
\date{August 24, 2025}
\subjclass[2020]{28A12, 28A75, 26A42, 49Q15}
\keywords{Lebesgue measure, Carath\'eodory measure, Hausdorff measure, perimeter, Minkowski content, Whitney decomposition, Besicovitch covering}
\begin{document}

\begin{abstract}
We propose a one-parameter family of Borel measures on $\Rn$ that augments Lebesgue measure with an explicit, scale-consistent penalty for codimension--1 geometry. The construction employs Carath\'eodory's outer-measure method with a mixed gauge $h_\lambda(r)=r^n+\lambda r^{n-1}$, $\lambda>0$, thereby interpolating between $n$-dimensional volume and $(n-1)$-dimensional surface content in a single scalar, $\sigma$-additive, Borel regular measure $\mu_\lambda$. We prove: (i) $\mu_\lambda$ is an outer measure whose Carath\'eodory-measurable sets include all Borel sets; (ii) a scaling law $\mu_\lambda(tE)=t^n\mu_{\lambda/t}(E)$; (iii) volume--surface comparability on Lipschitz domains $\Omega$: there exist dimensional constants $c_n,C_n>0$ such that $c_n(|\Omega|+\lambda\,\mathcal H^{n-1}(\partial\Omega)) \le \mu_\lambda(\Omega) \le C_n(|\Omega|+\lambda\,\mathcal H^{n-1}(\partial\Omega))$, linking $\mu_\lambda$ to perimeter. Conceptually, $\mu_\lambda$ addresses a practical limitation of Lebesgue measure---its insensitivity to boundary complexity---while remaining $\sigma$-additive and compatible with the Carath\'eodory--Hausdorff framework. Applications include robust integration on noisy domains, BV-flavored penalties for image/shape analysis via a single measure, and regularized probability models on domains with rough boundaries. We illustrate with examples in $\mathbb{R}$ and $\mathbb{R}^2$, indicate connections to Minkowski content and finite-perimeter theory, and outline open questions (sharp constants, coarea-type formulas, uniformly rectifiable boundaries).
\end{abstract}

\maketitle

\section{Introduction}
Lebesgue measure $|\cdot|$ on $\Rn$ is the unique (up to a constant factor) translation-invariant, locally finite, Borel regular measure agreeing with Euclidean volume on rectangles. Its strengths---completeness, $\sigma$-finiteness, compatibility with limits, and a rich integration theory---make it the default for analysis and probability.

\smallskip
\noindent\textbf{Limitation.} Lebesgue measure is insensitive to boundary complexity. Two sets with the same volume but vastly different boundary geometry (e.g., a ball versus a comb-like set with corrugated boundary) have the same Lebesgue measure, although the latter is more costly for numerical quadrature on domains, PDE boundary layers, or geometric regularization. In geometric measure theory, codimension--1 geometry is quantified by $\Hh^{n-1}$ of the (reduced) boundary and by the perimeter $P(\Omega)$ for finite-perimeter sets. But the functional $E\mapsto |E|+\lambda\,\Hh^{n-1}(\partial E)$ is \emph{not} a measure: countable additivity fails because boundaries interact.

\smallskip
\noindent\textbf{Goal.} Design a single scalar, $\sigma$-additive, Borel regular measure $\mu_\lambda$ that
\begin{itemize}[leftmargin=1.4em]
\item agrees (up to dimensional constants) with volume for bulk $n$-dimensional parts;
\item responds (again up to constants) to $(n-1)$-dimensional boundary complexity;
\item retains the Carath\'eodory--Hausdorff toolkit (outer regularity, measurable Borel sets);
\item admits explicit scaling and comparison inequalities on natural classes (Lipschitz domains).
\end{itemize}

\smallskip
\noindent\textbf{Idea.} Use Carath\'eodory's construction with a \emph{mixed gauge}
\begin{equation}\label{eq:mixgauge}
h_\lambda(r)=r^n+\lambda r^{n-1}.
\end{equation}
Cover $E\subset\R^n$ by balls (or cubes) $U_i$ of diameter $r_i$ and minimize the total cost $\sum (r_i^n+\lambda r_i^{n-1})$. The $r^n$-term captures volume; the $\lambda r^{n-1}$-term forces extra payment when many small elements are needed, which happens along complicated boundaries. Passing to the Carath\'eodory measure yields $\mu_\lambda$.

\smallskip
\noindent\textbf{Contributions.} We prove:
\begin{enumerate}[leftmargin=1.4em]
\item $\mu_\lambda$ is an outer measure; all Borel sets are $\mu_\lambda$-measurable; $\mu_\lambda$ is Borel regular.
\item \emph{Scaling law:} $\mu_\lambda(tE)=t^n\mu_{\lambda/t}(E)$.
\item \emph{Volume--surface comparability:} on bounded Lipschitz $\Omega$, $\mu_\lambda(\Omega)\simeq |\Omega|+\lambda \Hh^{n-1}(\partial\Omega)$ with dimensional constants depending only on $n$ and the Lipschitz character.
\end{enumerate}

\smallskip
\noindent\textbf{Related work.} The construction sits between Hausdorff measures with general gauges and classical perimeter/Minkowski content \cite{FollandRA,RoydenFitzpatrick,FedererGMT,MattilaGMT,EvansGariepy,AmbrosioFuscoPallara,MaggiSetsBV}. The novelty is the \emph{two-homogeneity mixed gauge} producing a bona fide measure sensitive to codimension--1 structure with a clean scaling law.

\medskip
\noindent\textbf{Organization.} Section~\ref{sec:prelim} recalls preliminaries. Section~\ref{sec:defbasic} defines $\mu_\lambda$ and its core properties. Section~\ref{sec:compare} proves volume--surface comparability on Lipschitz domains. Section~\ref{sec:apps} gives examples and applications. We conclude with limitations and open questions.

\section{Preliminaries}\label{sec:prelim}
We work in $\Rn$ with the Euclidean metric. For $E$, $|E|$ denotes Lebesgue measure, $\Hh^s$ is $s$-dimensional Hausdorff measure, and $\diam(U)$ is the Euclidean diameter. We write $\mathscr B(\Rn)$ for the Borel $\sigma$-algebra.

\begin{definition}[Carath\'eodory outer measure]\label{def:caratheodory}
Given a gauge $h:[0,\infty)\to[0,\infty)$ with $h(0)=0$, define
\[
\Hh^{h}(E):=\inf\Big\{\sum_i h(\diam U_i): E\subset \bigcup_i U_i\Big\}.
\]
\end{definition}
It is classical that $\Hh^h$ is an outer measure and all Borel sets are $\Hh^h$-measurable; moreover $\Hh^h$ is Borel regular (see, e.g., \cite[Ch.~1]{FollandRA} and \cite[\S4]{MattilaGMT}). We shall use Besicovitch's covering lemma and Whitney decompositions; see \cite{BesicovitchCovering,Whitney1934,EvansGariepy}.

We recall that a bounded Lipschitz domain $\Omega$ has finite surface area $\Hh^{n-1}(\partial\Omega)$ and finite perimeter $P(\Omega)=\Hh^{n-1}(\partial\Omega)$ (up to normalization); cf.~\cite{AmbrosioFuscoPallara,MaggiSetsBV}.

\section{The mixed-gauge measure $\mu_\lambda$: definition and basic properties}\label{sec:defbasic}

\subsection{Definition}
Fix $\lambda>0$ and define the gauge $h_\lambda$ by \eqref{eq:mixgauge}. Define $\mu_\lambda:=\Hh^{h_\lambda}$ via Carath\'eodory's construction using coverings by balls (or cubes). Explicitly, for $E\subset\Rn$,
\[
\mu_\lambda(E)
:=\inf\Big\{\sum_i \big(\diam U_i\big)^n+\lambda\big(\diam U_i\big)^{n-1}
:\ E\subset \bigcup_i U_i\Big\}.
\]

\begin{proposition}[Outer measure and measurability]\label{prop:outer}
$\mu_\lambda$ is an outer measure on $\Rn$; every Borel set is $\mu_\lambda$-measurable. Moreover, $\mu_\lambda$ is Borel regular.
\end{proposition}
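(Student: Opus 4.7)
The plan is to follow the classical metric-Carathéodory template, checking the three outer-measure axioms, verifying the Carathéodory (metric) condition to unlock Borel measurability, and then exhibiting Borel hulls for regularity. The gauge $h_\lambda$ is continuous on $[0,\infty)$, vanishes at $0$, and is non-decreasing, so all the standard arguments apply with $h_\lambda$ in place of $r^s$; the only real calculation is showing that the extra $\lambda r^{n-1}$ term behaves just as well as $r^n$ under refinement of covers.

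First I would verify the outer-measure axioms. The empty set has $\mu_\lambda(\emptyset)=0$ by using covers of arbitrarily small diameter (here $h_\lambda(r)\to0$ as $r\to0$). Monotonicity is immediate since any admissible cover of $E$ also covers any subset. For countable subadditivity, given $E=\bigcup_i E_i$, select covers $\mathcal U_i$ of $E_i$ with $\sum_{U\in\mathcal U_i} h_\lambda(\diam U)\le \mu_\lambda(E_i)+\varepsilon/2^i$, concatenate to form a cover of $E$, and take the infimum.

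Next I would establish the metric outer measure property: $\dist(A,B)>0$ implies $\mu_\lambda(A\cup B)=\mu_\lambda(A)+\mu_\lambda(B)$. The $\le$ direction is subadditivity; for $\ge$ I invoke the ``Method~II'' interpretation of Carathéodory's construction implicit in the definition, namely that one takes the supremum over $\delta$-restricted infima $\mu_\lambda^\delta$ in which all covering elements satisfy $\diam U_i<\delta$. Choosing $\delta<\dist(A,B)$, each covering element meets at most one of $A,B$, so any near-optimal cover of $A\cup B$ splits into a cover of $A$ and a cover of $B$, yielding $\mu_\lambda^\delta(A)+\mu_\lambda^\delta(B)\le \mu_\lambda^\delta(A\cup B)+\varepsilon$. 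Passing to $\delta\to 0$ gives the Carathéodory condition, and the classical result (e.g., \cite[Ch.~1]{FollandRA}, \cite[\S4]{MattilaGMT}) then forces every Borel set to be $\mu_\lambda$-measurable.

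For Borel regularity, given $E$ with $\mu_\lambda(E)<\infty$, I would pick for each $k\in\mathbb N$ a cover $\{U_i^{(k)}\}$ by balls with $\sum_i h_\lambda(\diam U_i^{(k)})\le \mu_\lambda(E)+1/k$, then slightly dilate each $U_i^{(k)}$ to an open ball $V_i^{(k)}$ with $\diam V_i^{(k)}\le(1+\eta_k)\diam U_i^{(k)}$, where $\eta_k\downarrow 0$ is chosen so that continuity of $h_\lambda$ gives $\sum_i h_\lambda(\diam V_i^{(k)})\le \mu_\lambda(E)+2/k$. Set $B:=\bigcap_k\bigcup_i V_i^{(k)}$, which is Borel and contains $E$. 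Monotonicity gives $\mu_\lambda(E)\le\mu_\lambda(B)\le\sum_i h_\lambda(\diam V_i^{(k)})\le \mu_\lambda(E)+2/k$ for every $k$, hence equality. The case $\mu_\lambda(E)=\infty$ is trivial since $B=\R^n$ works.

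The main obstacle is the metric condition: without restricting cover diameters one could in principle bridge across the gap $\dist(A,B)$ with a single large covering set, so establishing additivity across separated sets genuinely requires the $\delta$-restricted infimum. The dilation argument in the regularity step is also delicate in the sense that $h_\lambda$ has two homogeneities ($r^n$ and $r^{n-1}$), but continuity and the finiteness of the tail $\sum_i h_\lambda(\diam U_i^{(k)})$ make the perturbation uniform in $k$, so no new difficulty appears beyond the classical Hausdorff case.
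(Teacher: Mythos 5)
Your proof is correct in substance and far more detailed than the paper's, which simply cites the general Carath\'eodory theory. But the comparison turns on a point you have (rightly) flagged rather than buried: your argument for the metric condition uses the $\delta$-restricted infima $\mu_\lambda^\delta$ and the supremum over $\delta$ (Carath\'eodory's Method~II), whereas the paper's displayed definition of $\mu_\lambda$ is a single unrestricted infimum over all covers (Method~I). This is not a cosmetic difference. For the Method~I set function the metric condition, and with it Borel measurability, genuinely fails for this gauge: in $\R$ with $\lambda>1$, a single interval covering $\{0,1\}$ costs $1+\lambda<2\lambda=\mu_\lambda(\{0\})+\mu_\lambda(\{1\})$, so the singleton $\{0\}$ is not Carath\'eodory measurable; in $\R^2$, two parallel unit segments at small positive distance can be covered jointly by thin balls at cost about $\sqrt2\,\lambda$, strictly below the sum $2\lambda$ of their separate measures. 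So the ``Method~II interpretation'' you invoke is not merely implicit in the definition --- it is a necessary amendment, and the paper's one-line appeal to the general theory only applies after that amendment is made. Your route is the correct one; state explicitly that you are redefining $\mu_\lambda$ as $\sup_{\delta>0}\mu_\lambda^\delta$.

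Two small repairs to your own write-up. First, for $n=1$ the gauge is $h_\lambda(r)=r+\lambda$ for $r>0$, so $h_\lambda(r)\not\to0$ as $r\to0$ and $h_\lambda$ is not continuous at $0$; get $\mu_\lambda(\emptyset)=0$ from the empty cover, not from small-diameter covers. Second, in the Borel regularity step under Method~II you must choose the $k$-th covers with maximal diameter $\delta_k\downarrow0$ (taking $\sum_i h_\lambda(\diam U_i^{(k)})\le\mu_\lambda^{\delta_k}(E)+1/k\le\mu_\lambda(E)+1/k$); otherwise the inequality $\mu_\lambda(B)\le\sum_i h_\lambda(\diam V_i^{(k)})$ is not available, since $\mu_\lambda$ is now a supremum of the restricted infima and a fixed-scale cover only bounds $\mu_\lambda^{\delta}$ for $\delta$ at that scale. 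With those adjustments the argument is complete.
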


\begin{proof}
This follows from the general Carath\'eodory theory for outer measures; see \cite[Ch.~1]{FollandRA}.
\end{proof}

\begin{remark}[Lebesgue at $\lambda=0$]\label{rmk:lambda0}
For $\lambda=0$, $h_0(r)=r^n$, so $\mu_0$ coincides with $\Hh^n$, i.e., Lebesgue measure up to the conventional dimensional constant.
\end{remark}

\subsection{Basic properties}
\begin{itemize}[leftmargin=1.4em]
\item \emph{Monotonicity in $\lambda$.} If $\lambda_1\le \lambda_2$ then $h_{\lambda_1}\le h_{\lambda_2}$ pointwise, hence $\mu_{\lambda_1}(E)\le \mu_{\lambda_2}(E)$.
\item \emph{Lower bound by volume.} Since $h_\lambda(r)\ge r^n$, we have $\mu_\lambda(E)\ge c_n |E|$ for Borel $E$.
\item \emph{Tightness on bounded sets.} If $\diam(E)\le R$, a single ball of radius $R$ yields $\mu_\lambda(E)\lesssim |E|+\lambda R^{n-1}$.
\end{itemize}

\begin{theorem}[Scaling]\label{thm:scaling}
For $t>0$ and any $E\subset\Rn$,
\[
\mu_\lambda(tE)=t^n\,\mu_{\lambda/t}(E).
\]
\end{theorem}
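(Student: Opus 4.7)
The plan is to reduce the identity to the pointwise scaling behaviour of the gauge and push it through the infimum using a bijection between admissible coverings.

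First I would observe the algebraic identity
\[
h_\lambda(tr) = (tr)^n + \lambda (tr)^{n-1} = t^n\bigl(r^n + (\lambda/t)\, r^{n-1}\bigr) = t^n\, h_{\lambda/t}(r),
\]
valid for all $t>0$ and $r\ge 0$. This shows that the two-parameter family of gauges is closed under dilation, with the parameter transforming as $\lambda\mapsto\lambda/t$, and it is the algebraic source of the theorem.

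Next I would exploit the fact that the covering class used in the Carath\'eodory construction (Euclidean balls or axis-aligned cubes) is invariant under the dilation $x\mapsto tx$: if $\{U_i\}$ covers $E$, then $\{tU_i\}$ covers $tE$ with $\diam(tU_i)=t\,\diam(U_i)$, and conversely $\{t^{-1}V_i\}$ covers $E$ whenever $\{V_i\}$ covers $tE$. Applying the gauge identity termwise yields
\[
\sum_i h_\lambda(\diam tU_i) \;=\; t^n \sum_i h_{\lambda/t}(\diam U_i).
\]
Taking the infimum over admissible coverings $\{U_i\}$ of $E$ on the right, and recognising via the bijection that the left-hand side is the infimum over admissible coverings of $tE$, gives $\mu_\lambda(tE)\le t^n\mu_{\lambda/t}(E)$. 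The reverse inequality follows by applying the same argument to the inverse bijection, contracting a covering of $tE$ to one of $E$.

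There is essentially no analytical obstacle here; the argument is purely algebraic modulo the dilation invariance of the covering class. The only subtlety worth flagging is that the chosen class must be scale-invariant, which holds for both Euclidean balls and axis-aligned cubes. If one later worked with a $\delta$-restricted Carath\'eodory class (as in fine Hausdorff constructions), the diameter ceiling $\delta$ would have to rescale by $t$ and one would pass to a supremum over $\delta$; but the formulation in the excerpt uses coverings with no diameter bound, so this complication does not arise.
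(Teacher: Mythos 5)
Your proposal is correct and follows essentially the same route as the paper's proof: the pointwise gauge identity $h_\lambda(tr)=t^n h_{\lambda/t}(r)$ combined with the dilation bijection between coverings of $E$ and of $tE$, then passing to infima. Your explicit treatment of both inequalities and the remark about $\delta$-restricted constructions are slightly more careful than the paper's one-directional presentation, but the substance is identical.
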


\begin{proof}
If $E\subset\bigcup U_i$ with $\diam U_i=r_i$, then $tE\subset\bigcup tU_i$ with $\diam(tU_i)=tr_i$. Hence
\begin{align*}
\sum_i h_\lambda(tr_i)
&=\sum_i\big((tr_i)^n+\lambda(tr_i)^{n-1}\big)
=t^n\sum_i\big(r_i^n+(\lambda/t)r_i^{n-1}\big)
=t^n\sum_i h_{\lambda/t}(r_i).
\end{align*}
Taking infima over covers gives the claim.
\end{proof}

\section{Comparability on Lipschitz domains}\label{sec:compare}
Let $\Omega\subset\Rn$ be bounded and Lipschitz. Denote $|\Omega|$ its Lebesgue measure and $\Hh^{n-1}(\partial\Omega)$ its surface area.

\begin{theorem}[Upper comparability]\label{thm:upper}
There exists $C_n>0$ such that for every bounded Lipschitz $\Omega$ and $\lambda>0$,
\[
\mu_\lambda(\Omega)\le C_n\big(|\Omega|+\lambda\,\Hh^{n-1}(\partial\Omega)\big).
\]
\end{theorem}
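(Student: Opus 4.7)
The plan is to construct an explicit cover of $\Omega$ that approximately minimizes the Carath\'eodory gauge sum for $h_\lambda$, then balance the two contributions via an AM--GM inequality. The cover splits naturally into (a) a thin layer $T_r(\partial\Omega)\cap\Omega$ of width $r$ hugging the boundary, covered by balls of radius $r$ centered on $\partial\Omega$, and (b) the interior core $\Omega\setminus T_r(\partial\Omega)$, covered by Whitney-type cubes whose side lengths are bounded below by a constant multiple of $r$. The free parameter $r$ is tuned to equalize the bulk and surface contributions of $h_\lambda(s)=s^n+\lambda s^{n-1}$.

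For the boundary layer, a $5r$-covering argument combined with Besicovitch's lemma and the Ahlfors $(n-1)$-regularity of a Lipschitz boundary yields a cover of $T_r(\partial\Omega)\cap\Omega$ by at most $N\le C\,\Hh^{n-1}(\partial\Omega)/r^{n-1}$ balls of radius $r$, with $C$ depending only on $n$ and the Lipschitz character; its gauge cost is $N(r^n+\lambda r^{n-1})\le C\,\Hh^{n-1}(\partial\Omega)(r+\lambda)$. For the interior, I would apply a Whitney decomposition of $\Omega$ into cubes $Q_j$ with $\ell(Q_j)\simeq \dist(Q_j,\partial\Omega)$ and retain only those with $\ell(Q_j)\gtrsim r$; these cover $\Omega\setminus T_r(\partial\Omega)$. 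Since $\sum_j\ell(Q_j)^n\le|\Omega|$ and $\ell(Q_j)^{n-1}\le \ell(Q_j)^n/r$, the interior gauge cost is at most $|\Omega|+\lambda|\Omega|/r$.

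Adding the two contributions yields
\[
\mu_\lambda(\Omega)\ \le\ |\Omega|+\frac{\lambda|\Omega|}{r}+C\,\Hh^{n-1}(\partial\Omega)(r+\lambda).
\]
The cross terms $\lambda|\Omega|/r$ and $r\,\Hh^{n-1}(\partial\Omega)$ have product $\lambda|\Omega|\,\Hh^{n-1}(\partial\Omega)$, so the choice $r^{\ast}=\sqrt{\lambda|\Omega|/\Hh^{n-1}(\partial\Omega)}$ equalizes them at $\sqrt{\lambda|\Omega|\,\Hh^{n-1}(\partial\Omega)}$. The AM--GM inequality $2\sqrt{ab}\le a+b$ with $a=|\Omega|$ and $b=\lambda\,\Hh^{n-1}(\partial\Omega)$ then absorbs both cross terms into the desired bound $\mu_\lambda(\Omega)\le C_n(|\Omega|+\lambda\,\Hh^{n-1}(\partial\Omega))$.

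The main obstacle I anticipate is the uniformity of the boundary covering estimate $N\lesssim \Hh^{n-1}(\partial\Omega)/r^{n-1}$, which is genuinely available only for $r$ below the natural Lipschitz flattening scale $r_0(\Omega)$ where local graph charts and Ahlfors regularity apply. When $r^{\ast}$ exceeds $r_0$, I would cap $r$ at $r_0$ and absorb the resulting slack via the isoperimetric-type inequality $|\Omega|^{(n-1)/n}\le C_n\,\Hh^{n-1}(\partial\Omega)$. A secondary point is the truncation of the Whitney family: one must verify that retaining cubes with $\ell(Q_j)\gtrsim r$ still covers $\Omega\setminus T_{cr}(\partial\Omega)$ for a dimensional $c$, and that the residual thin shell can be absorbed into the boundary layer by mildly enlarging $r$.
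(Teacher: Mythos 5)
Your proposal is correct, and it follows the same overall blueprint as the paper---Whitney cubes for the bulk, a boundary-adapted family at a free scale, and an optimization of that scale that reduces to AM--GM---but your treatment of the boundary contribution is genuinely different and, importantly, is the sound version of the argument. The paper retains the full multi-scale family of Whitney cubes with $\ell(Q)<\ell_0$ and asserts $\sum_{Q\in\mathcal B}\ell(Q)^{n-1}\lesssim \Hh^{n-1}(\partial\Omega)$; as stated this fails, since for a Lipschitz domain each dyadic generation of Whitney cubes contributes an amount comparable to $\Hh^{n-1}(\partial\Omega)$ to that sum, so summing over the infinitely many generations below $\ell_0$ diverges. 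Your repair---discard the small Whitney cubes altogether and cover the residual collar $T_r(\partial\Omega)\cap\Omega$ by $N\lesssim \Hh^{n-1}(\partial\Omega)/r^{n-1}$ balls of a \emph{single} radius $r$, counted via lower Ahlfors regularity of $\partial\Omega$---is exactly what is needed, and your subsequent bookkeeping (interior cost $|\Omega|+\lambda|\Omega|/r$, collar cost $\Hh^{n-1}(\partial\Omega)(r+\lambda)$, then $r^\ast=\sqrt{\lambda|\Omega|/\Hh^{n-1}(\partial\Omega)}$ and AM--GM) is precisely what the paper's ``optimizing $\ell_0$'' step amounts to. One caution on your closing caveat: when $r^\ast$ exceeds the flattening scale $r_0$ and you cap $r$ at $r_0$, the leftover term is $\lambda|\Omega|/r_0$, and the isoperimetric inequality absorbs it into $\lambda\,\Hh^{n-1}(\partial\Omega)$ only when $r_0\gtrsim|\Omega|^{1/n}$, which can fail (e.g.\ for a large smooth domain carrying a single small-scale boundary feature that forces $r_0$ down). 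What saves you is that the comparability constants are permitted to depend on the Lipschitz character (as the paper's Corollary~\ref{cor:equivalence} concedes), e.g.\ through a bound of the form $|\Omega|\lesssim r_0\,\Hh^{n-1}(\partial\Omega)$ valid with character-dependent constants; you should state explicitly that your $C$ carries this dependence rather than being purely dimensional.
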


\begin{proof}
Apply a Whitney decomposition $\{Q\}$ of $\Omega$ (see \cite{Whitney1934,EvansGariepy}), with $\ell(Q)\simeq \dist(Q,\partial\Omega)$ and bounded overlap of dilates. Split into \emph{interior} cubes $\mathcal I$ with $\ell(Q)\ge \ell_0$ and \emph{boundary} cubes $\mathcal B$ with $\ell(Q)<\ell_0$. Then
\[
\sum_{Q\in\mathcal I} \big(\ell(Q)^n+\lambda\ell(Q)^{n-1}\big)
\lesssim |\Omega|+\lambda\,\ell_0^{-1}|\Omega|.
\]
For boundary cubes, flatten $\partial\Omega$ in Lipschitz charts. Standard arguments (projecting along normals and using bounded multiplicity) yield
\[
\sum_{Q\in\mathcal B}\ell(Q)^{n-1}\lesssim \Hh^{n-1}(\partial\Omega),
\qquad
\sum_{Q\in\mathcal B}\ell(Q)^n\lesssim \ell_0\,\Hh^{n-1}(\partial\Omega).
\]
Optimizing $\ell_0$ removes it from the bound and gives the claim.
\end{proof}

\begin{theorem}[Lower comparability]\label{thm:lower}
There exists $c_n>0$ such that for every bounded Lipschitz $\Omega$ and $\lambda>0$,
\[
\mu_\lambda(\Omega)\ge c_n\big(|\Omega|+\lambda\,\Hh^{n-1}(\partial\Omega)\big).
\]
\end{theorem}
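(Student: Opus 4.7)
The plan is to bound, for every admissible cover $\{U_i\}$ of $\Omega$, the two partial sums $\sum_i(\diam U_i)^n$ and $\sum_i(\diam U_i)^{n-1}$ from below separately, add them with the weight $\lambda$, and take the infimum over covers. The volume sum is immediate: the isodiametric inequality gives $|U_i|\le \omega_n 2^{-n}(\diam U_i)^n$, and subadditivity of Lebesgue measure on $\Omega\subseteq\bigcup_i U_i$ yields $\sum_i(\diam U_i)^n\ge c_n|\Omega|$ with a purely dimensional constant.

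The surface sum is the crux, and the subtlety is that a cover of the open set $\Omega$ need not touch $\partial\Omega$ at all, so one cannot directly apply a Hausdorff-content bound on the boundary. The strategy is to \emph{project rather than restrict}. Fix a finite atlas $\{B_j\}_{j=1}^N$ of balls $B_j=B(x_j,r_0)$ with $x_j\in\partial\Omega$, of bounded overlap, whose number $N$ and radius $r_0$ depend only on the Lipschitz character; within each $B_j^\ast:=B(x_j,2r_0)$ the boundary is, after rotation, the graph of an $L$-Lipschitz $\varphi_j\colon\R^{n-1}\to\R$ with $\Omega$ on one side, and $\pi_j$ denotes the orthogonal projection onto that hyperplane. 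Since $\diam\pi_j(U_i\cap B_j^\ast)\le\diam U_i$, the $(n-1)$-dimensional isodiametric inequality gives $\Hh^{n-1}(\pi_j(U_i\cap B_j^\ast))\le c_{n-1}(\diam U_i)^{n-1}$. The shadows $\pi_j(U_i\cap B_j^\ast)$ cover $\pi_j(\Omega\cap B_j)$; provided $r_0$ is chosen small relative to $L$, this shadow contains an $(n-1)$-ball of radius $\sim r_0/L$ about $\pi_j(x_j)$ and hence has area comparable, up to a constant depending only on $n$ and $L$, to $\Hh^{n-1}(\partial\Omega\cap B_j)$. Summing the resulting chart-level inequality $\sum_i(\diam U_i)^{n-1}\gtrsim_{n,L}\Hh^{n-1}(\partial\Omega\cap B_j)$ over $j=1,\dots,N$ produces $\sum_i(\diam U_i)^{n-1}\ge c_n'\,\Hh^{n-1}(\partial\Omega)$, with constant absorbing $N$ and hence depending on the Lipschitz character.

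Combining the two bounds in the cost $\sum_i h_\lambda(\diam U_i)=\sum_i(\diam U_i)^n+\lambda\sum_i(\diam U_i)^{n-1}$ and infimizing over covers yields the theorem. The principal technical obstacle is the shadow comparison $|\pi_j(\Omega\cap B_j)|\gtrsim_{n,L}\Hh^{n-1}(\partial\Omega\cap B_j)$, since a priori the shadow depends on how deeply $\Omega$ penetrates $B_j$: this is handled by choosing $r_0$ small enough relative to $L$ that the graph of $\varphi_j$ crosses the cylinder over a fixed $(n-1)$-ball centered at $\pi_j(x_j)$, which simultaneously forces the shadow and the boundary area to be of order $r_0^{n-1}$ with $L$-dependent constants. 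Cover elements with $\diam U_i>r_0$ need no separate treatment, because the projection bound $\Hh^{n-1}(\pi_j(U_i\cap B_j^\ast))\le c_{n-1}(\diam U_i)^{n-1}$ is uniform in $\diam U_i$, so the argument uses every cover element on the same footing.
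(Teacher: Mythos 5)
Your proof is correct as a proof of the comparability with constants depending on $n$ and the chart data, and it reaches the inequality by a genuinely different route in both halves. For the volume term the paper extracts a disjoint subfamily via Vitali/Besicovitch, whereas you need no covering lemma at all: the isodiametric inequality plus countable subadditivity of Lebesgue outer measure gives $|\Omega|\le\sum_i|U_i|\lesssim_n\sum_i(\diam U_i)^n$ directly, which is simpler and airtight. For the surface term the paper restricts to the balls with $\dist(x_i,\partial\Omega)\le 2r_i$, projects those, and invokes a surface $5r$-lemma; you instead project \emph{every} cover element into each chart, control each shadow by the $(n-1)$-dimensional isodiametric inequality, and compare the shadow of $\Omega\cap B_j$ with $\Hh^{n-1}(\partial\Omega\cap B_j)$ by showing both are of order $r_0^{n-1}$ with $L$-dependent constants. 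Your version has a real advantage here: it squarely addresses the fact that a cover of the open set $\Omega$ need not meet $\partial\Omega$ at all, a point the paper's assertion that the projections of the boundary-adjacent balls ``form a cover of $\partial\Omega$'' quietly elides (those balls only accumulate at the boundary, and an extra closure or limiting argument is needed to get an honest cover of $\partial\Omega$).

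The price of projecting everything is the factor $N$ you lose when summing the chart-level inequality over $j$: your surface constant is of order $c(n,L)/N$, hence degrades with the number of charts, i.e.\ with $\Hh^{n-1}(\partial\Omega)/r_0^{n-1}$. Be aware that this is not really a defect of your argument but a limitation of the statement itself: covering a highly corrugated Lipschitz domain (fixed $L$, chart scale $r_0\to0$, perimeter $\to\infty$, e.g.\ a comb with many thin teeth) by a \emph{single} large ball shows that $\mu_\lambda(\Omega)\le C(\diam\Omega)^{n-1}(\diam\Omega+\lambda)$, so no constant depending only on $n$ and $L$ can bound $\mu_\lambda(\Omega)$ below by $c\lambda\Hh^{n-1}(\partial\Omega)$; the theorem's literal ``there exists $c_n>0$'' is an overstatement, and the paper's $5r$-lemma step cannot deliver the uniform constant it implicitly claims either. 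If you want the sharper localized bound where it is available, restrict the chart-$j$ sum to the $U_i$ meeting $B_j^\ast$ (which your projection argument already permits, since empty intersections contribute nothing) and bound the multiplicity with which each $U_i$ is counted: elements with $\diam U_i\lesssim r_0$ meet boundedly many charts, and only the large elements force the $N$-dependence.
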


\begin{proof}
Consider any cover by balls $B(x_i,r_i)$. By a Vitali selection (or Besicovitch covering), extract a disjoint subfamily covering $\Omega$ up to null set, giving $\sum r_i^n\gtrsim |\Omega|$.

To capture the boundary term, classify $\mathcal B:=\{i:\dist(x_i,\partial\Omega)\le 2r_i\}$. In Lipschitz charts, the projections of $\{B(x_i,r_i)\}_{i\in\mathcal B}$ form a cover of $\partial\Omega$ by surface balls with comparable radii. The surface $5r$-lemma yields a disjoint subfamily whose dilates cover $\partial\Omega$ with bounded multiplicity, hence
\[
\sum_{i\in\mathcal B} r_i^{n-1}\gtrsim \Hh^{n-1}(\partial\Omega).
\]
Therefore $\sum_i(r_i^n+\lambda r_i^{n-1})\gtrsim |\Omega|+\lambda \Hh^{n-1}(\partial\Omega)$. Taking infimum over all covers completes the proof.
\end{proof}

\begin{corollary}[Equivalence]\label{cor:equivalence}
For bounded Lipschitz $\Omega$,
\[
\mu_\lambda(\Omega)\simeq |\Omega|+\lambda\,\Hh^{n-1}(\partial\Omega),
\]
with constants depending only on $n$ and the Lipschitz character.
\end{corollary}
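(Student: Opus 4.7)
The plan is to derive Corollary~\ref{cor:equivalence} by directly chaining the two comparability theorems just established. Theorem~\ref{thm:upper} supplies the upper estimate $\mu_\lambda(\Omega)\le C_n\bigl(|\Omega|+\lambda\Hh^{n-1}(\partial\Omega)\bigr)$, and Theorem~\ref{thm:lower} supplies the matching lower estimate $\mu_\lambda(\Omega)\ge c_n\bigl(|\Omega|+\lambda\Hh^{n-1}(\partial\Omega)\bigr)$. Concatenating these two inequalities is precisely the two-sided bound encoded by the symbol $\simeq$ in the statement, so nothing remains to be proved at the level of inequalities.

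The only substantive task is verifying that the constants $c_n$ and $C_n$ extracted from the two theorems depend only on the dimension $n$ and on the Lipschitz character of $\partial\Omega$, and in particular are independent of $\lambda>0$, of $\diam(\Omega)$, and of $\Hh^{n-1}(\partial\Omega)$ itself. Inspecting the proof of Theorem~\ref{thm:upper}, the Whitney constants, the bounded overlap of doubled cubes, the projection multiplicity in flat Lipschitz charts, and the outcome of the $\ell_0$ optimisation are all purely dimensional once the Lipschitz norms of the boundary parametrisations are fixed. In the proof of Theorem~\ref{thm:lower}, the Besicovitch/Vitali selection constant is dimensional and the surface $5r$-lemma multiplicity depends only on the local Lipschitz character. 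Crucially, both bounds split \emph{additively} into a volume part and a surface part, so there is no mechanism through which a hidden $\lambda$-dependence could enter $c_n$ or $C_n$.

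I would also record explicitly the sense of the phrase ``depending only on $n$ and the Lipschitz character'': namely, the constants are determined by $n$ together with an upper bound on the Lipschitz norm of the boundary parametrisations and the number of charts needed to cover $\partial\Omega$. This justifies the uniformity claim and, together with the concatenation above, completes the corollary.

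I do not anticipate a substantive mathematical obstacle, since the corollary is essentially a packaging statement. The main risk is bookkeeping: one must check that every $\lesssim$ step inside the proofs of Theorems~\ref{thm:upper} and \ref{thm:lower} hides only dimensional and Lipschitz constants. If any intermediate step silently invoked a constant depending on $\lambda$, on $|\Omega|$, or on $\Hh^{n-1}(\partial\Omega)$, the equivalence would lose its uniform meaning; a line-by-line audit of the two proofs, tracking exactly which constants absorb what, rules this out.
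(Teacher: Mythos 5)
Your proposal is correct and matches the paper's intent exactly: the corollary is stated without proof precisely because it is the immediate conjunction of Theorem~\ref{thm:upper} and Theorem~\ref{thm:lower}. Your additional audit of the constants' dependence on $n$ and the Lipschitz character (and independence of $\lambda$, $|\Omega|$, and $\Hh^{n-1}(\partial\Omega)$) is sound diligence but does not change the route.
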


\begin{remark}[Minkowski content heuristic]
Let $\Omega_\varepsilon:=\{x:\dist(x,\partial\Omega)<\varepsilon\}$. For Lipschitz boundaries, $|\Omega_\varepsilon|=|\Omega|+C_n\varepsilon\,\Hh^{n-1}(\partial\Omega)+o(\varepsilon)$. The mixed gauge penalizes the small-radius balls that necessarily lie near $\partial\Omega$, leading to a total penalty proportional to $\lambda\,\Hh^{n-1}(\partial\Omega)$.
\end{remark}

\section{Examples and applications}\label{sec:apps}

\subsection{Intervals in $\mathbb{R}$}
Let $I=[a,b]$, length $L=b-a$.
\begin{proposition}\label{prop:interval}
$\mu_\lambda(I)\simeq L+\lambda$, with absolute constants.
\end{proposition}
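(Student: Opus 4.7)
The plan is to compute $\mu_\lambda(I)$ essentially by hand, because in dimension one the gauge collapses to $h_\lambda(r)=r+\lambda$ for $r>0$ (with the convention $h_\lambda(0)=0$): each cover element pays a flat entry fee $\lambda$ plus its diameter. I expect the bound $L+\lambda$ to emerge as an equality, yielding sharp constants $c=C=1$ rather than merely abstract ones.

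For the upper bound I would use the trivial one-element cover $\{I\}$: its diameter is $L$, so the total cost is $h_\lambda(L)=L+\lambda$, and hence $\mu_\lambda(I)\le L+\lambda$.

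For the lower bound, fix any countable cover $\{U_i\}$ of $I$; restrict to covers of finite total cost, so each $U_i$ is bounded. Replace each $U_i$ by its convex hull $\tilde U_i=[\inf U_i,\sup U_i]$, an interval of identical diameter; the family $\{\tilde U_i\}$ still covers $I$. Lebesgue subadditivity then gives
\[
\sum_i \diam U_i \;=\; \sum_i |\tilde U_i| \;\ge\; |I| \;=\; L.
\]
Moreover, since $I$ is nonempty it cannot be covered by countably many singletons, so at least one element has positive diameter and contributes a full $\lambda$ to the cost. Therefore $\sum_i h_\lambda(\diam U_i)\ge L+\lambda$, and passing to the infimum gives $\mu_\lambda(I)\ge L+\lambda$.

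I do not foresee any real obstacle: the argument is entirely elementary and avoids the Whitney and Besicovitch machinery used in Theorems~\ref{thm:upper}--\ref{thm:lower}. The only step requiring even minimal care is the convex-hull reduction, which is harmless in $\R$ precisely because the diameter of a bounded set equals the length of its convex hull. The payoff is the identity $\mu_\lambda(I)=L+\lambda$, which is strictly stronger than the stated comparability and exhibits the sharp one-dimensional constants, providing a clean sanity check on the general definition of $\mu_\lambda$.
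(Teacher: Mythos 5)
Your proof is correct, and its lower bound takes a genuinely more elementary route than the paper's. The upper bound coincides with the paper's argument, which covers $I$ by $N$ equal subintervals at cost $L+\lambda N$ and optimizes at $N\simeq 1$; your single-interval cover is exactly the $N=1$ case. For the lower bound the paper invokes a Vitali selection to extract a fixed multiple of $\lambda$ near the endpoints, whereas you obtain the $L$ term from countable subadditivity of Lebesgue measure after replacing each cover element by its convex hull, and the $\lambda$ term from the observation that at least one element must have positive diameter. This avoids all covering-lemma machinery and yields the exact identity $\mu_\lambda(I)=L+\lambda$, which is sharp (it matches the one-interval cover) and strictly stronger than the stated comparability. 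Two small points of care: the justification for the $\lambda$ term should rest on $L>0$ (equivalently $\sum_i\diam U_i\ge L>0$, or uncountability of $I$) rather than mere nonemptiness, since a degenerate interval is a singleton covered by one diameter-zero set --- and indeed the proposition fails for $L=0$, as $\mu_\lambda(\{a\})=0$ under the convention $h_\lambda(0)=0$ that you correctly adopt from Definition~\ref{def:caratheodory}; and the convex-hull reduction is what lets you handle arbitrary covering sets, though it is vacuous if one restricts to ball (interval) covers as in the paper's definition of $\mu_\lambda$.
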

\begin{proof}
Cover $I$ by $N$ equal subintervals of length $L/N$. The cost is $N((L/N)^1+\lambda (L/N)^0)=L+\lambda N$, optimized at $N\simeq 1$, giving $\mu_\lambda(I)\lesssim L+\lambda$. Conversely, any cover must pay $L$ in the $r^1$-term and at least a fixed multiple of $\lambda$ near endpoints (formalized via a Vitali argument), hence $\mu_\lambda(I)\gtrsim L+\lambda$.
\end{proof}

\subsection{Polygons in $\mathbb{R}^2$}
Let $\Omega\subset\mathbb{R}^2$ be a bounded polygon with area $A$ and perimeter $P$. By Corollary~\ref{cor:equivalence}, $\mu_\lambda(\Omega)\simeq A+\lambda P$. Refinements via edge- and vertex-adapted coverings yield explicit constants.

\subsection{Integration and BV-flavored regularization}
For $f\in L^1_{\text{loc}}(\Rn)$, define $\int f\,d\mu_\lambda$ as usual. If $f=\mathbf 1_\Omega$ and $\Omega$ is Lipschitz, then $\int \mathbf 1_\Omega\,d\mu_\lambda=\mu_\lambda(\Omega)\simeq |\Omega|+\lambda \Hh^{n-1}(\partial\Omega)$. Minimizing $\int |f-g|^2\,d\mu_\lambda$ over indicator functions (or BV relaxations) induces an implicit perimeter regularization via a single measure.

\subsection{Probability on rough domains}
If a random variable $X$ is supported on a bounded Lipschitz $\Omega$, modeling with the $\mu_\lambda$-density can regularize mass concentration near $\partial\Omega$: sets concentrating probability on thin boundary layers pay an extra $\lambda$-cost in total mass.

\section{Conclusion and further questions}
We introduced a mixed-gauge Carath\'eodory measure $\mu_\lambda$ that encodes both bulk volume and surface complexity in a single $\sigma$-additive, Borel regular measure. It satisfies a clean scaling law and is quantitatively comparable to $|\Omega|+\lambda\,\Hh^{n-1}(\partial\Omega)$ on Lipschitz domains. Open directions include sharp constants, a coarea-type formula for BV, extensions to uniformly rectifiable boundaries, anisotropic metrics (Finsler), and stochastic/PDE settings where $\mu_\lambda$ appears naturally.

\bigskip

\end{document}